\newcommand*\Laplace{\mathop{}\!\mathbin\bigtriangleup}
\newcommand{\R}{\mathbb{R}}
\newcommand{\C}{\mathbb{C}}
\newcommand{\A}{\mathcal{A}}
\newcommand{\U}{{U}}
\renewcommand{\L}{\mathcal{L}}
\newcommand{\T}{\mathcal{T}}
\renewcommand{\d}{\mathrm{d}}
\renewcommand{\Re}{\operatorname{Re}}
\newcommand{\fra}{\mathfrak{a}}
\renewcommand{\mid}{\, \vert \,}
\renewcommand{\MR}{\textit{MR}\,}
\newcommand\lra{\longrightarrow}
\newcommand\restr[2]{{#1}{_{|#2}}}
\theoremstyle{plain}
\newtheorem{theorem}{Theorem}[section]
\newtheorem{proposition}[theorem]{Proposition}
\theoremstyle{definition}
\newtheorem{definition}[theorem]{Definition}
\newtheorem{remark}[theorem]{Remark}
\begin{document}
	\title{On the norm-continuity for evolution family arising from non-autonomous forms$^*$\footnote{ $^*$This work is supported by Deutsche Forschungsgemeinschaft  DFG (Grant LA 4197/8-1)}}
	\author{
		Omar EL-Mennaoui and  Hafida Laasri \\ 
	}
	
	\address{University of Wuppertal
School of Mathematics and Natural Sciences
Arbeitsgruppe Funktionalanalysis, Gau\ss stra\ss e 20
D-42119 Wuppertal, Germany}
	\email{laasri@uni-wuppertal.de}
	\address{
Ibn Zohr University, Faculty of Sciences
Departement of mathematics, Agadir, Morocco}
	\email{elmennaouiomar@yahoo.fr}
	
	\maketitle

\begin{abstract} We consider evolution equations of the form
\begin{equation*}\label{Abstract equation}
\dot u(t)+\A(t)u(t)=0,\ \ t\in[0,T],\ \
u(0)=u_0,
\end{equation*}
where $\A(t),\ t\in [0,T],$ are associated with a non-autonomous sesquilinear 
form $\fra(t,\cdot,\cdot)$ on a Hilbert space $H$ with constant domain $V\subset H.$ 
In this note we continue the study of fundamental operator theoretical properties of the solutions. 
We give a sufficient condition for norm-continuity of evolution families on each spaces $V, H$ and 
on the dual space $V'$ of $V.$ The abstract results are applied to a class of equations governed by
time dependent Robin boundary conditions on exterior domains and by Schrödinger operator with 
time dependent potentials. 
\end{abstract}



\section*{Introduction\label{s1}}
 \noindent Throughout this paper $H,V$ are two separable Hilbert spaces over $\mathbb C$ such that $V$ is 
 densely and continuously embedded into $H$ (we write $V \underset d \hookrightarrow H$).
   We denote by $(\cdot \mid \cdot)_V$ the scalar product and $\|\cdot\|_V$ the norm
on $V$ and by $(\cdot \mid \cdot)_H, \|\cdot\|_H$ the corresponding quantities in $H.$ Let $V'$ 
be the antidual of $V$ and denote by $\langle ., . \rangle$ the duality
between $V'$ and $V.$ As usual, by identifying $H$ with  $H'$  
we have $V\underset d\hookrightarrow H\cong H'\underset d\hookrightarrow V'$
see e.g., \cite{Bre11}. 
\par\noindent Let $\fra:[0,T]\times V \times V \to \C$ be a \textit{non-autonomous sesquilinear form}, i.e., $\fra(t;\cdot,\cdot)$ is for each $t\in[0,T]$ a sesquilinear form, 
\begin{equation}\label{measurability} 
\fra(\cdot;u,v) \text{ is measurable for all } u,v\in V, 
 \end{equation}
such that 
\begin{equation}\label{eq:continuity-nonaut}
|\fra(t,u,v)| \le M \Vert u\Vert_V \Vert v\Vert_V\ \text{ and } \Re ~\fra (t,u,u)\ge \alpha \|u\|^2_V  \quad \ (t,s\in[0,T], u,v\in V),
\end{equation}
for some  constants $M, \alpha>0$ that are independent of $t, u,v.$ Under these assumptions
there exists  for each $t\in[0,T]$ an isomorphism 
$\A(t):V\to V^\prime$ such that
$\langle \A(t) u, v \rangle = \fra(t,u,v)$ for all $u,v \in V.$ It is well known that  $-\A(t),$ seen as unbounded operator 
with domain $V,$ generates an analytic $C_0$-semigroup on $V'$. The operator $\A(t)$ is usually called the operator
associated with $\fra(t,\cdot,\cdot)$ on $V^\prime.$ Moreover, we associate an operator $A(t)$ with $\fra(t;\cdot,\cdot)$ on $H$ 
as follows
\begin{align*}
D(A(t))={}&\{u\in V \mid \exists f\in H \text{ such that } \fra(t;u,v)=(f\mid v)_H  \text{ for all } v\in V \}\\
A(t) u = {}&  f.
\end{align*}
It is not difficult to see that  $A(t)$ is the part of $\A(t)$ in $H.$ In fact, we have
$D(A(t))= \{ u\in V : \A(t) u \in H \}$ and 
$A(t) u =  \A(t) u.$
Furthermore, $-A(t)$ with domain $D(A(t))$ generates
a holomorphic $C_0$-semigroup on $H$ which is the restriction to $H$ of that generated by $-\A(t).$ For all this results 
see e.g. \cite[Chapter 2]{Tan79} or \cite[Lecture 7]{Ar06}. 

\par\noindent We now assume that there exist  $0< \gamma<1$ and a continuous function $\omega:[0,T]\longrightarrow [0,+\infty)$ such that 
  \begin{equation}\label{eq 1:Dini-condition} 
  |\fra(t,u,v)-\fra(s,u,v)| \le\omega(|t-s|) \Vert u\Vert_{V_\gamma} \Vert v\Vert_{V_\gamma}\quad \ (t,s\in[0,T], u,v\in V),
  \end{equation}
  with
 \begin{equation}\label{eq 2:Dini-condition}\sup_{t\in[0,T]} \frac{\omega(t)}{t^{\gamma/2}}<\infty \quad \text{ and }
 \int_0^T\frac{\omega(t)}{t^{1+\gamma/2}}<\infty
 \end{equation}
 where $V_\gamma:=[H,V]$ is the complex interpolation space. In addition we assume that $\fra(t_0; \cdot,\cdot)$ 
 has the square root property for some $t_0\in[0,T]$ (and then for all $t\in[0,T]$ by \cite[Proposition 2.5]{Ar-Mo15} ), 
 i.e.,
 
 \begin{equation}\label{square property}
D(A^{\frac{1}{2}}(t_0))=V.
 \end{equation} 
Recall that  for  symmetric forms, i.e., if $\fra(t;u,v)=\overline{\fra(t;v,u)}$ for all $t,u,v,$ then the square root property is satisfied. 

Under the assumptions \eqref{measurability}-\eqref{square property} it is known that for each $x_0\in V$ the non-autonomous homogeneous Cauchy problem
  \begin{equation}\label{Abstract Cauchy problem 0}
  \left\{
\begin{aligned}
\dot u(t)&+\A(t)u(t)= 0\quad \hbox{a.e. on}\ [0,T],\\
u(0)&=x_0, \,\\
\end{aligned}
\right.
\end{equation}
 has a unique solution $u \in \MR(V,H):=L^2(0,T;V)\cap H^1(0,T;H)$ such that $u\in C([0,T];V).$ This result has been proved by Arendt and Monniaux \cite{Ar-Mo15} (see also \cite{ELLA15}) when the form $\fra$ satisfies the weaker condition
  \begin{equation}\label{eq 1:Dini-conditionWeaker} 
  |\fra(t,u,v)-\fra(s,u,v)| \le\omega(|t-s|) \Vert u\Vert_V \Vert v\Vert_{V_\gamma}\quad \ (t,s\in[0,T], u,v\in V).
  \end{equation}

  \medskip

 \par\noindent In this paper we continue to investigate further regularity of the solution of 
 (\ref{Abstract Cauchy problem 0}).  For this it is necessary to associate to the Cauchy problem \eqref{Abstract Cauchy problem 0} an \textit{evolution family} \[\mathcal U:=\Big\{U(t,s):\  0\leq s\leq t\leq T\Big\}\subset \L(H)\] which means that: 
 \begin{itemize}
 	\item[$(i)$] $U(t,t)=I$ and $U(t,s)= U(t,r)U(r,s)$ for every $0\leq r\leq s\leq t\leq T,$ 
 	\item [$(ii)$] for every $x\in X$ the function $(t,s)\mapsto U(t,s)x$ is continuous into $H$ 
 	for $0\leq s\leq t\leq T$
 	\item [$(iii)$] for each $x_0\in H, U(\cdot,s)x_0$ is the unique solution of (\ref{Abstract Cauchy problem 0}).
 \end{itemize}
\begin{definition} Let $Y\subseteq H$ be a subspace. An evolution family $\mathcal U\subset \L(H)$ is said to be \textit{norm continuous in $Y$} if $\mathcal U\subset\L(Y)$ and the map $(t,s)\mapsto U(t,s)$ is
norm continuous with value
in $\L(Y)$ for $0\leq s<t\leq T.$
\end{definition}
 \par\noindent If the non-autonomous form $\fra$ satisfies the weaker condition (\ref{eq 1:Dini-conditionWeaker}) then it is known that (\ref{Abstract Cauchy problem 0}) is governed by an evolution family which is norm continuous in $V$ \cite[Theorem 2.6]{LH17}, and norm continuous in $H$ if in addition $V\hookrightarrow H$ is compact \cite[Theorem 3.4]{LH17}. However, for many boundary value problem the compactness embedding fails. 
 \par\noindent
In this paper we prove that the compactness assumption can be omitted provided $\fra$ satisfies
\eqref{eq 1:Dini-condition} instead of (\ref{eq 1:Dini-conditionWeaker}). This will allow us to consider a large class of examples of applications. 
 One of the main ingredient used here is the non-autonomous 
\textit{returned adjoint form}  $\fra^*_r : [0,T]\times V\times V\lra \C$ defined  by 
\begin{equation}\label{returnd adjoint form} 
\fra^*_r(t,u,v):=\overline{\fra(T-t,v,u)} \quad \ (t,s\in[0,T], u,v\in V).
\end{equation}
The concept of  returned adjoint forms appeared in the work of D. Daners \cite{Daners01} but for different interest. Furthermore,  \cite[Theorem 2.6]{LH17} cited above will be also needed to prove our main result. 
\medskip

 \par\noindent We note that the study of regularity properties of the evolution family 
with respect to $(t,s)$ in general Banach spaces has been investigated in the case of constant domains by Komatsu \cite{H.Ko61}  
and Lunardi \cite{Lu89}, and by Acquistapace \cite{Ac88} for time-dependent domains. 
\medskip

We illustrate our abstract results by two relevant examples. The first one concerns the Laplacian with non-autonomous Robin boundary conditions on a unbounded Lipschitz domain. The second one traits a class of  Schr\"odinger operators with time dependent potential. 
  
\section{Preliminary results \label{Approximation}}
 Let $
\fra: [0,T]\times V\times V \to \C$ a non-autonomous sesquilinear form satisfying \eqref{measurability} and (\ref{eq:continuity-nonaut}). Then the following well known result regarding \textit{$L^2$-maximal regularity in $V'$} 
is due to J. L. Lions

\begin{theorem}[Lions, 1961]\label{wellposedness in V'2}
	For each given $s\in[0,T)$ and $x_0\in H$ the homogenuous Cauchy problems 
\begin{equation}\label{evolution equation u(s)=x}
\left\{
\begin{aligned}
\dot u(t)&+\A(t)u(t)= 0\quad \hbox{a.e. on}\ [s,T],\\
u(s)&=x, \,\\
\end{aligned}
\right.
\end{equation} has a unique solution $u \in \MR(V,V'):=\MR(s,T;V,V'):=L^2(s,T;V)\cap H^1(s,T;V').$ 
\end{theorem}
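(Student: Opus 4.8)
The plan is to obtain the solution from J.-L.\ Lions' \emph{representation theorem}, the non-symmetric substitute for Lax--Milgram in which the space of test functions need not be complete; this is Lions' original route to problems of the form \eqref{evolution equation u(s)=x}. Fix $s\in[0,T)$ (one may take $s=0$ after the substitution $t\mapsto t-s$, which preserves \eqref{measurability}--\eqref{eq:continuity-nonaut}) and work with the Hilbert space $\H:=L^2(0,T;V)$ and the test space
\begin{gather*}
\V:=\{\varphi\in H^1(0,T;H)\cap L^2(0,T;V)\mid \varphi(T)=0\},\\
\|\varphi\|_\V^2:=\|\varphi\|_{L^2(0,T;V)}^2+\|\dot\varphi\|_{L^2(0,T;H)}^2,
\end{gather*}
which injects continuously into $\H$. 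On $\H\times\V$ consider the sesquilinear form
\[\mathcal E(u,\varphi):=\int_0^T\fra(t,u(t),\varphi(t))\,\d t-\int_0^T(u(t)\mid\dot\varphi(t))_H\,\d t\]
and, for the datum $x\in H$, the antilinear functional $\ell(\varphi):=(x\mid\varphi(0))_H$.

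First I would check the hypotheses of Lions' theorem. For fixed $\varphi\in\V$ the map $u\mapsto\mathcal E(u,\varphi)$ is continuous on $\H$: by \eqref{eq:continuity-nonaut} and the continuity of $V\hookrightarrow H$ one has $|\mathcal E(u,\varphi)|\le M\|u\|_{L^2(0,T;V)}\|\varphi\|_{L^2(0,T;V)}+c\,\|u\|_{L^2(0,T;V)}\|\dot\varphi\|_{L^2(0,T;H)}$. The functional $\ell$ is bounded on $(\V,\|\cdot\|_\V)$ because $H^1(0,T;H)\hookrightarrow C([0,T];H)$, so $|\ell(\varphi)|\le\|x\|_H\|\varphi(0)\|_H\lesssim\|x\|_H\|\varphi\|_\V$. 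The crucial point is the $\H$-coercivity on $\V$: taking real parts and using $\tfrac{\d}{\d t}\|\varphi(t)\|_H^2=2\Re(\varphi(t)\mid\dot\varphi(t))_H$ together with $\varphi(T)=0$,
\[-\Re\int_0^T(\varphi(t)\mid\dot\varphi(t))_H\,\d t=\tfrac12\|\varphi(0)\|_H^2-\tfrac12\|\varphi(T)\|_H^2=\tfrac12\|\varphi(0)\|_H^2\ge 0,\]
whence, by \eqref{eq:continuity-nonaut}, $|\mathcal E(\varphi,\varphi)|\ge\Re\mathcal E(\varphi,\varphi)\ge\alpha\|\varphi\|_{L^2(0,T;V)}^2=\alpha\|\varphi\|_{\H}^2$. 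Lions' theorem then produces $u\in L^2(0,T;V)$ with $\mathcal E(u,\varphi)=\ell(\varphi)$ for all $\varphi\in\V$ (and $\|u\|_{L^2(0,T;V)}\le\alpha^{-1}\|x\|_H$).

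It remains to decode this variational identity. Testing with $\varphi\in C_c^\infty((0,T);V)\subset\V$ gives $\int_0^T\langle\A(t)u(t),\varphi(t)\rangle\,\d t=\int_0^T(u(t)\mid\dot\varphi(t))_H\,\d t$, i.e.\ the distributional derivative of $u$ equals $-\A(\cdot)u(\cdot)$; since $\|\A(t)u(t)\|_{V'}\le M\|u(t)\|_V$ this lies in $L^2(0,T;V')$, so $u\in H^1(0,T;V')$, hence $u\in\MR(0,T;V,V')$, and the equation in \eqref{evolution equation u(s)=x} holds a.e. By the Lions--Magenes embedding $\MR(0,T;V,V')\hookrightarrow C([0,T];H)$, $u$ has a continuous $H$-valued trace at $t=0$. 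To identify it I would apply the integration-by-parts formula on $\MR(0,T;V,V')$ to $u$ and an arbitrary $\varphi\in\V$: using $\dot u=-\A(\cdot)u$ and $\dot\varphi(t)\in H$ this reduces to $\mathcal E(u,\varphi)=(u(0)\mid\varphi(0))_H$; comparing with $\mathcal E(u,\varphi)=(x\mid\varphi(0))_H$ and noting that $\{\varphi(0)\mid\varphi\in\V\}\supseteq V$ is dense in $H$ (take $\varphi(t)=(1-t/T)v$ for $v\in V$) forces $u(0)=x$. Uniqueness is the a priori estimate: if $w\in\MR(0,T;V,V')$ solves the homogeneous problem with $w(0)=0$, then $\tfrac{\d}{\d t}\|w(t)\|_H^2=2\Re\langle\dot w(t),w(t)\rangle=-2\Re\fra(t,w(t),w(t))\le 0$, so $w\equiv 0$.

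The main obstacle I anticipate is not any single estimate but the functional-analytic packaging: one must calibrate the norm on the (necessarily incomplete) test space $\V$ so that $\H$-continuity of $\mathcal E(\cdot,\varphi)$, $\H$-coercivity of $\mathcal E$ on $\V$, and boundedness of $\ell$ hold simultaneously, and then justify the transition from the bare variational solution $u\in L^2(0,T;V)$ to a genuine element of $\MR(0,T;V,V')$ carrying a classical initial value --- that is, the embedding $\MR(0,T;V,V')\hookrightarrow C([0,T];H)$ and the attendant product rule $\tfrac{\d}{\d t}(u\mid v)_H=\langle\dot u,v\rangle+\overline{\langle\dot v,u\rangle}$. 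A Galerkin scheme on a Hilbert basis of $V$, with uniform a priori bounds and weak-$*$ compactness, is a viable alternative but confronts exactly the same trace issue in the limit passage.
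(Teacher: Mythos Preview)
Your proposal is correct and follows precisely the route the paper indicates: the paper does not give its own argument but records that the result is proved via Lions's representation theorem, with references to Showalter and Dautray--Lions. Your write-up supplies exactly that proof --- the choice of the pre-Hilbert test space with $\varphi(T)=0$, the $\H$-coercivity coming from $\Re\fra(t,\cdot,\cdot)\ge\alpha\|\cdot\|_V^2$ together with $-\Re\int_0^T(\varphi\mid\dot\varphi)_H\,\d t=\tfrac12\|\varphi(0)\|_H^2\ge0$, the upgrade $u\in\MR(V,V')$ via $\dot u=-\A(\cdot)u\in L^2(0,T;V')$, and the identification of the trace $u(0)=x$ through the $\MR$-integration-by-parts formula --- and the uniqueness energy estimate is the standard one.
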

Recall that the maximal regularity space $\MR(V,V')$ is continuously embedded into $C([s,T],H)$ \cite[page 106]{Sho97}. A proof of Theorem \ref{wellposedness in V'2} using a representation theorem of linear functionals, known in the literature as \textit{Lions's representation Theorem}
can be found in \cite[page 112]{Sho97} or \cite[XVIII, Chpater 3, page 513]{DL88}.

\par\noindent 
\par\noindent
Furthermore, we consider the non-autonomous adjoint form $\fra^*:[0,T]\times V\times V\lra \C$ of $\fra$ defined by
\[ \fra^*(t;u,v):=\overline{\fra(t;v,u)}\]
for all $t\in[0,t]$ and $u,v\in V.$  Finally, we will need to consider
the returned adjoin form $\fra^*_r:[0,T]\times V\times V\lra \C$ given by \[\fra^*_r(t,u,v):=\fra^*(T-t,u,v).\]
Clearly, the adjoint form is a non-autonomous sesquilinear form and satisfies 
\eqref{measurability} and (\ref{eq:continuity-nonaut}) with the same constant $M, \alpha.$ 
Moreover, the adjoint operators $A^*(t), t\in[0,T]$ of $A(t), t\in[0,T]$ coincide with the 
operators associated with $\fra^*$ on $H.$ Thus applying Theorem \ref{wellposedness in V'2} to the returned adjoint form we obtain that 
the Cauchy problem associated with $\A^*_r(t):=\A^*(T-t)$ 
\begin{equation}\label{evolution equation u(s)=x returned}
\left\{
\begin{aligned}
\dot v(t)&+\A^*_r(t)v(t)= 0\quad \hbox{a.e. on}\ [s,T],\\
v(s)&=x, \,\\
\end{aligned}
\right.
\end{equation}
has for each $x\in H$ a unique solution $v\in MR(V,V').$ Accordingly,  for every 
$(t,s)\in\overline{\Delta}:=\{ (t,s)\in[0,T]^2:\  t\leq s\}$ 
and every $x\in H$ we can define the following family of linear operators
\begin{equation}\label{evolution family}
 U(t,s)x:=u(t) \quad \hbox{ and }\quad 
U^*_r(t,s)x:=v(t),
\end{equation}
where $u$ and $v$ are the unique solutions in $MR(V,V')$ 
respectively of (\ref{evolution equation u(s)=x}) and (\ref{evolution equation u(s)=x returned}). 
Thus each family $\{\mathcal U(t,s):\ (t,s)\in\overline{\Delta}\}$ 
and $\{\mathcal U^*_r(t,s):\ (t,s)\in\overline{\Delta}\}$ yields a contractive,
strongly continuous evolution family on $H$ \cite[Proposition]{LH17}.

 \par\noindent In the autonomous case, i.e., if $\fra(t,\cdot,\cdot)=\fra(\cdot,\cdot)$ for all $t\in[0,T],$ then one knows that $-A_0,$ the operator associated with $\fra_0$ in $H,$ generates a $C_0$-semigroup  $(T(t))_{t\geq 0}$ in $H.$ In this case $\U(t,s):=T(t-s)$ yields a strongly continuous evolution family on $H.$ Moreover, we have
\begin{equation}\label{equalities: adjoint EVF and EVF autonomous case}
\U(t,s)^\prime=T(t-s)^{\prime}=T^*(t-s)=\U^*(t,s)=\U^*_r(t,s).
\end{equation}
Here, $T(\cdot)^\prime$ denote the adjoint of $T(\cdot)$ which coincides with the $C_0$-semigroup  $(T^*(t))_{t\geq 0}$ associated with the adjoint form $\fra^*.$ 
In the non-autonomous setting however,  (\ref{equalities: adjoint EVF and EVF autonomous case}) fails in general even in the finite dimensional case, see \cite[Remark 2.7]{Daners01}. Nevertheless, Proposition \ref{equalities: adjoint EVF and EVF} below shows that the evolution families $\U$ and $\U_r^*$ can be related in a similar way. This formula appeared  in  \cite[Theorem 2.6]{Daners01}. 
\begin{proposition}\label{equalities: adjoint EVF and EVF} Let $\U$ and $\U^*_r$ be given by (\ref{evolution family}). Then we have 
	\begin{equation}\label{Key equalities: adjoint EVF and EVF}
	\big [ \U^*_r(t,s)\big ]^\prime x=\U(T-s,T-t)x
	\end{equation} 
for all $x\in H$ and $(t,s)\in\overline{\Delta}.$	
\end{proposition}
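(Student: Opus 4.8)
The plan is to read the identity off the variational (weak) formulations of the two Cauchy problems defining $\U$ and $\U^*_r$, by showing that a suitable time-dependent $H$-scalar product of the two corresponding solutions is constant in time and then comparing its values at the endpoints of the relevant interval. Fix $(t,s)\in\overline\Delta$; if $t=s$ both sides equal $x$ since $\U(s,s)=\U^*_r(s,s)=I$, so we may assume the underlying time interval is nondegenerate. As $[\U^*_r(t,s)]'$ denotes the Hilbert space adjoint, it suffices to prove
\[
\big(\,x \mid \U^*_r(t,s)y\,\big)_H=\big(\,\U(T-s,T-t)x \mid y\,\big)_H\qquad\text{for all }x,y\in H .
\]

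For the set-up I would let $v\in\MR(s,T;V,V')$ be the solution of \eqref{evolution equation u(s)=x returned} with $v(s)=y$, so that $\dot v(\tau)=-\A^*(T-\tau)v(\tau)$ for a.e.\ $\tau$ and, by \eqref{evolution family}, $\U^*_r(t,s)y=v(t)$; and I would let $u\in\MR(T-t,T;V,V')$ be the solution of \eqref{evolution equation u(s)=x} with $s$ replaced by $T-t$ and initial value $u(T-t)=x$, so that $\dot u(\sigma)=-\A(\sigma)u(\sigma)$ for a.e.\ $\sigma$ and $\U(T-s,T-t)x=u(T-s)$. Next I would time-reverse $u$: set $w(\tau):=u(T-\tau)$ for $\tau\in[s,t]$; a change of variables shows $w\in\MR(s,t;V,V')$ with $\dot w(\tau)=\A(T-\tau)w(\tau)$ a.e.\ on $[s,t]$. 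The function to study is then $\varphi(\tau):=\big(w(\tau)\mid v(\tau)\big)_H$ on $[s,t]$.

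The core of the argument is the differentiation of $\varphi$. Both $w$ and $v$ restrict to elements of $\MR(s,t;V,V')$, hence they lie in $C([s,t];H)$ and the product $\varphi$ is absolutely continuous with $\varphi'(\tau)=\langle\dot w(\tau),v(\tau)\rangle+\overline{\langle\dot v(\tau),w(\tau)\rangle}$ for a.e.\ $\tau$. Substituting the two equations and using $\langle\A(r)\xi,\eta\rangle=\fra(r,\xi,\eta)$ and $\langle\A^*(r)\eta,\xi\rangle=\fra^*(r,\eta,\xi)=\overline{\fra(r,\xi,\eta)}$ for $\xi,\eta\in V$, the first term equals $\fra(T-\tau,w(\tau),v(\tau))$ and the second equals $-\overline{\fra^*(T-\tau,v(\tau),w(\tau))}=-\fra(T-\tau,w(\tau),v(\tau))$, so $\varphi'\equiv0$. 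Thus $\varphi$ is constant on $[s,t]$, and $\varphi(s)=\varphi(t)$ reads $\big(u(T-s)\mid y\big)_H=\big(x\mid v(t)\big)_H$, which is exactly the displayed identity; since $y\in H$ was arbitrary this gives $[\U^*_r(t,s)]'x=\U(T-s,T-t)x$.

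The step I expect to need the most care is the justification of the product rule for the $H$-scalar product of two functions in the maximal-regularity space $\MR(V,V')$ — namely that $\tau\mapsto\big(w(\tau)\mid v(\tau)\big)_H$ is absolutely continuous with derivative $\langle\dot w,v\rangle+\overline{\langle\dot v,w\rangle}$, with the complex conjugate placed on the correct factor. This is classical (it underlies the embedding $\MR(V,V')\hookrightarrow C([s,T];H)$ already used above; see e.g.\ \cite{Sho97} or \cite{DL88}), but it must be invoked precisely, and one also has to check that the time reversal $\sigma\mapsto T-\sigma$ keeps $w$ inside the maximal-regularity space so that it is an admissible competitor in the variational formulation. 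Apart from that, the argument is bookkeeping with the duality pairing $\langle\cdot,\cdot\rangle$ and the definitions in \eqref{evolution family}.
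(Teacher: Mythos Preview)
Your proof is correct and takes a genuinely different route from the paper's. The paper proceeds by discretisation: it approximates $\fra$ by a piecewise-autonomous form $\fra_\Lambda$ on a partition $\Lambda$ of $[0,T]$, for which the associated evolution family is an explicit product of semigroups $T_k$; on each factor the autonomous identity $T_k^{*}=T_k'$ is available, so the formula $[\U^*_{\Lambda,r}(t,s)]'=\U_{\Lambda_T}(T-s,T-t)$ can be checked by hand, and the conclusion follows by passing to the limit $|\Lambda|\to 0$ via the convergence result of \cite{LASA14}. Your argument, by contrast, is a direct duality computation: you pair the (time-reversed) solution of the forward problem with the solution of the returned adjoint problem and differentiate the $H$-scalar product, using only the product rule in $\MR(V,V')$ and the definition of $\fra^*$. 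This is shorter, self-contained, and works under the bare Lions hypotheses \eqref{measurability}--\eqref{eq:continuity-nonaut} without any approximation machinery; the paper's approach, on the other hand, makes the link to the autonomous case transparent and reuses tools already developed elsewhere in the literature. The step you flag as delicate---the product rule for $(w\mid v)_H$ with $w,v\in\MR(V,V')$ and the stability of $\MR$ under the affine time reversal $\sigma\mapsto T-\sigma$---is indeed the only thing to justify, and both facts are standard (see \cite[p.~106]{Sho97} or \cite[XVIII.3]{DL88}).
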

The equality \eqref{Key equalities: adjoint EVF and EVF} will play a crucial role in the proof of our main result.
We include here a new proof for the sake of completeness. 
\begin{proof}(of Proposition \ref{equalities: adjoint EVF and EVF})
	Let $\Lambda=(0=\lambda_0<\lambda_1<...<\lambda_{n+1}=T)$ be
	a subdivision of $[0,T].$ Let $\fra_k:V \times V \to \mathbb C\ \ \hbox{ for } k=0,1,...,n$ be given by \begin{equation*}\begin{aligned}
		\ \fra_k(u,v):=\fra_{k,\Lambda}(u,v):=\frac{1}{\lambda_{k+1}-\lambda_k}
		\int_{\lambda_k}^{\lambda_{k+1}}&\fra(r;u,v){\rm  d}r\ \hbox{ for } u,v\in V. \
	\end{aligned}
\end{equation*}
All these  forms satisfy (\ref{eq:continuity-nonaut}) with the same constants $\alpha, M.$ The associated operators in $V'$ are denoted by $\A_k\in \L(V,V')$ and are given  for all $u\in V$ and $k=0,1,...,n$ by
\begin{equation}\label{eq:op-moyen integrale}
\A_ku :=\A_{k,\Lambda}:=\frac{1}{\lambda_{k+1}-\lambda_k}
\int_{\lambda_k}^{\lambda_{k+1}}\A(r)u{\rm  d}r.\ \ \  
\end{equation}
Consider the non-autonomous form $\fra_\Lambda:[0,T]\times V \times V \to \C$ defined by
    \begin{equation}\label{form: approximation formula1}
    \fra_{\Lambda}(t;\cdot,\cdot):=\begin{cases}
    \fra_k(\cdot,\cdot)&\hbox{if }t\in [\lambda_k,\lambda_{k+1})\\
    \fra_n(\cdot,\cdot)&\hbox{if }t=T\ .
    \end{cases}
\end{equation}
Its associated time dependent operator $\A_\Lambda(\cdot):  [0,T]\subset \L(V,V')$ is given by 
      \begin{equation}\label{form: approximation formula1}
    \A_{\Lambda}(t):=\begin{cases}
    \A_k&\hbox{if }t\in [\lambda_k,\lambda_{k+1})\\
    \A_n &\hbox{if }t=T\ .
    \end{cases}
\end{equation}
Next denote by  $T_k$ the $C_0-$semigroup associated with $\fra_k$  in $H$ for all $k=0,1...n.$ Then applying Theorem \ref{wellposedness in V'2}) to the form $\fra_\Lambda$ we obtain that in this case the associated evolution family $\U_\Lambda(t,s)$ is given explicitly for $\lambda_{m-1}\leq s<\lambda_m<...<\lambda_{l-1}\leq t<\lambda_{l}$
	by 
	\begin{equation}\label{promenade1}\U_\Lambda (t,s):= T_{l-1}(t-\lambda_{l-1})
	T_{l-2}(\lambda_{l-1}-\lambda_{l-2})...T_{m}(\lambda_{m+1}-\lambda_{m})T_{m-1}(\lambda_{m}-s),
	\end{equation}
	and for $\lambda_{l-1}\leq a\leq b<\lambda_{l}$ by 
\begin{equation}\label{promenade2}U_\Lambda (t,s):= T_{l-1}(t-s).\end{equation}
By \cite[Theorem 3.2]{LASA14} we know that $(\U_\Lambda)_{\Lambda}$ converges weakly in $MR(V,V')$ as $|\Lambda|\to 0$ and 
\[\lim\limits_{|\Lambda|\to 0}\|\U_\Lambda-\U\|_{MR(V,V')}=0\]
The continuous embedding of $MR(V,V')$ into $C([0,T];H)$ implies that $\lim\limits_{|\Lambda|\to 0}\U_\Lambda=\U$
in the weak operator topology of $\L(H).$
\par\noindent Now, let $(t,s)\in \overline{\Delta}$ with $\lambda_{m-1}\leq s<\lambda_m<...<\lambda_{l-1}\leq t<\lambda_{l}$ be fixed. Applying the above approximation argument to $\fra_r^*$ one obtains that
\begin{align}\label{eq1 proof returned adjoint}
\U^*_{\Lambda,r}(t,s)&=T_{l-1,r}^*(t-\lambda_{l-1})
T_{l-2,r}^*(\lambda_{l-1}-\lambda_{l-2})...T_{m,r}^*(\lambda_{m+1}-\lambda_{m})T_{m-1,r}^*(\lambda_{m}-s)
\\\label{eq2 proof returned adjoint}&=T_{l-1,r}^\prime(t-\lambda_{l-1})
T_{l-2,r}^\prime(\lambda_{l-1}-\lambda_{l-2})...T_{m,r}^\prime(\lambda_{m+1}-\lambda_{m})T_{m-1,r}^\prime(\lambda_{m}-s)
\end{align}
where $T_{k,r}$ and $T_{k,r}^*$ are the $C_0$-semigroups associated with \begin{equation}\label{eq1 proof Thm equalities: adjoint EVF and EVF} \fra_{k,r}(u,v):=\frac{1}{\lambda_{k+1}-\lambda_k}\int_{\lambda_k}^{\lambda_{k+1}}\fra(T-r;u,v){\rm  d}r=\frac{1}{\lambda_{k+1}-\lambda_k}\int_{T-\lambda_{k+1}}^{T-\lambda_k}\fra(r;u,v){\rm  d}r\end{equation}
and its adjoint $\fra_{k,r}^*$, respectively. Recall that $T_{k,r}^*=T_{k,r}^\prime.$

\par\noindent On the other hand, the last equality in (\ref{eq1 proof Thm equalities: adjoint EVF and EVF}) implies that $T_{k,r}$ coincides with the semigroup associated with $\fra_{k,\Lambda_T}$ where $\Lambda_T$ is the subdivision $\Lambda_T:=(0=T-\lambda_{n+1}<T-\lambda_n<...<T-\lambda_1<T-\lambda_0=T).$ It follows from \eqref{promenade1}-\eqref{promenade2} and \eqref{eq1 proof returned adjoint}-\eqref{eq2 proof returned adjoint} that 
\begin{align*}\Big[\U^*_{\Lambda,r}(t,s)\Big]^\prime&=\T_{m-1,r}(\lambda_{m}-s)\T_{m,r}(\lambda_{m+1}-\lambda_{m})...\T_{l-2,r}(\lambda_{l-1}-\lambda_{l-2})\T_{l-1,r}(t-\lambda_{l-1})
\\&=\T_{m-1}\Big((T-s)-(T-\lambda_{m})\Big)\T_{m}\Big((T-\lambda_m)-(T-\lambda_{m+1})\Big)...\T_{l-1}\Big((T-\lambda_{l-1})-(T-t)\Big)\\&=\U_{\Lambda_T}(T-s,T-t)
\end{align*}
Finally,  the desired equality \eqref{Key equalities: adjoint EVF and EVF} follows by passing to the limit as $|\Lambda|=|\Lambda_T|\to 0.$
\end{proof}
\begin{remark}
	\label{remark-rescaling} The coerciveness assumption in  \eqref{eq:continuity-nonaut} may be replaced with 
	\begin{equation}\label{eq:Ellipticity-nonaut2}
	\Re \fra (t,u,u) +\omega\Vert u\Vert_H^2\ge \alpha \|u\|^2_V \quad ( t\in [0,T], u\in V)
	\end{equation}
	for some $\omega\in \R.$ In fact, $\fra$ satisfies (\ref{eq:Ellipticity-nonaut2}) if and only if the form $a_\omega$ given by $\fra_\omega(t;\cdot,\cdot):=\fra(t;\cdot,\cdot)+\omega (\cdot\mid \cdot)$ 
	satisfies the second inequality in \eqref{eq:continuity-nonaut}. Moreover, if $u\in MR(V,V')$ and $v:=e^{-w.}u,$ then $v\in MR(V,V')$ and $u$ satisfies (\ref{evolution equation u(s)=x}) if and only if $v$ satisfies 
	\begin{equation*}
	\dot{v}(t)+(\omega+\mathcal A(t))v(t)=0 \ \ \  t{\rm
		-a.e.}  \hbox{ on} \ [s,T],\
	\ \ \ \ v(s)=x. \
	\end{equation*}

\end{remark}
\section{Norm continuous evolution family}\label{Sec2 Norm continuity}

  In this section we assume  that the non-autonomous form $\fra$ satisfies
  (\ref{eq:continuity-nonaut})-\eqref{square property}. Thus as mentioned in the introduction, 
  under theses assumptions the Cauchy problem (\ref{evolution equation u(s)=x})
 has $L^2$-maximal regularity in $H$. 
 Thus for each $x\in V,$  \[ U(\cdot,s)x\in \MR(V,H):=\MR (s,T;V,H):=L^2(s,T;V)\cap H^1(s,T;H).\]
 Moreover, $ U(\cdot,s)x\in C[s,T];V)$ by \cite[Theorem 4.2]{Ar-Mo15}. From \cite[Theorem 2.7]{LH17} we known that the restriction of $\U$ to $V$ defines an evolution family which norm continuous. The same is also true for the Cauchy problem
(\ref{evolution equation u(s)=x returned}) and the assocaited evolution 
family $U^*_r$ since the returned adjoint form $\fra_r^*$ inherits all 
 properties of $\fra.$

\noindent  In the following we establish that $\U$ can be extended to a strongly continuous evolution
family on $V'.$ 
 \begin{proposition}\label{Lemma EVF on V'} Let $\fra$ be a non-autonomous sesquilinear form satisfying 
(\ref{eq:continuity-nonaut})-(\ref{square property}).
 Then $\U$ can be extended to a strongly continuous evolution family on $V^\prime,$ which we still denote $\U.$
 \end{proposition}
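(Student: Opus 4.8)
The plan is to build the extension by duality from the evolution family $\U^*_r$ of the returned adjoint form $\fra^*_r$, using the identity \eqref{Key equalities: adjoint EVF and EVF} of Proposition~\ref{equalities: adjoint EVF and EVF}. First I would note that $\fra^*_r$ satisfies \eqref{eq:continuity-nonaut}--\eqref{square property} with the same constants (in particular it has the square root property, as recalled at the beginning of this section), so that by \cite[Theorem 2.7]{LH17} applied to $\fra^*_r$ the evolution family $\U^*_r$ restricts to a strongly continuous evolution family on $V$. In particular $\U^*_r(t,s)\in\L(V)$ for every $(t,s)\in\overline{\Delta}$, and since $\overline{\Delta}$ is compact and $(t,s)\mapsto\U^*_r(t,s)y$ is continuous into $V$ for each $y\in V$, the uniform boundedness principle yields $C:=\sup_{(t,s)\in\overline{\Delta}}\|\U^*_r(t,s)\|_{\L(V)}<\infty$. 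Then for $(t,s)\in\overline{\Delta}$ I would define $\widetilde\U(t,s):=\bigl[\U^*_r(T-s,T-t)\bigr]'$, the antidual with respect to $\langle\cdot,\cdot\rangle$ of the operator $\U^*_r(T-s,T-t)\in\L(V)$; it is bounded on $V'$ with $\|\widetilde\U(t,s)\|_{\L(V')}\le C$, and by Proposition~\ref{equalities: adjoint EVF and EVF} its restriction to $H\subset V'$ coincides with $U(t,s)$. Hence $\widetilde\U$ genuinely extends $\U$, and we keep the name $\U$ for it.

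Next I would verify the evolution-family axioms on $V'$ by transporting them from those of $\U^*_r$ via $(AB)'=B'A'$ and $\Id'=\Id$. Indeed $\U(t,t)=\Id_{V'}$, and for $s\le r\le t$ (so that $T-t\le T-r\le T-s$, and the composition law of $\U^*_r$ applies)
\[
\U(t,r)\,\U(r,s)=\bigl[\U^*_r(T-r,T-t)\bigr]'\bigl[\U^*_r(T-s,T-r)\bigr]'=\bigl[\U^*_r(T-s,T-r)\,\U^*_r(T-r,T-t)\bigr]'=\bigl[\U^*_r(T-s,T-t)\bigr]'=\U(t,s).
\]

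The step I expect to be the main obstacle is the strong continuity of $(t,s)\mapsto\U(t,s)x$ into $V'$ for arbitrary $x\in V'$: passing to adjoints preserves only weak$^*$-continuity in general, so the naive argument does not suffice, and I would instead combine the uniform bound $C$ with the density of $H$ in $V'$. For $x\in H$ the map $(t,s)\mapsto\U(t,s)x=U(t,s)x$ is continuous into $H$ (the original family is strongly continuous on $H$), hence into $V'$ because $H\hookrightarrow V'$ continuously. For general $x\in V'$ and $\varepsilon>0$, choosing $x_0\in H$ with $C\|x-x_0\|_{V'}<\varepsilon/3$ gives, for all $(t,s),(t',s')\in\overline{\Delta}$,
\[
\|\U(t,s)x-\U(t',s')x\|_{V'}\le 2C\|x-x_0\|_{V'}+\|U(t,s)x_0-U(t',s')x_0\|_{V'},
\]
and the right-hand side is $<\varepsilon$ as soon as $(t',s')$ is close enough to $(t,s)$; this yields the required continuity. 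Apart from this last point the argument is a routine transfer of the evolution-family structure through Proposition~\ref{equalities: adjoint EVF and EVF}, using that $\fra^*_r$ inherits all the hypotheses imposed on $\fra$.
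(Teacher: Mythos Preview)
Your argument is correct and follows essentially the same route as the paper: both use Proposition~\ref{equalities: adjoint EVF and EVF} together with the boundedness of $\U^*_r$ on $V$ (from \cite[Theorem 2.7]{LH17} applied to $\fra^*_r$) to obtain a uniform $\L(V')$-bound, and then extend from $H$ to $V'$ by density. Your version is in fact more detailed than the paper's, which only writes out the estimate $\|\U(t,s)x\|_{V'}\le c\|x\|_{V'}$ for $x\in H$ and then declares the claim to follow by density, whereas you explicitly verify the evolution-family axioms and the strong continuity.
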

\begin{proof} Let $x\in H$ and $(t,s)\in\overline {\Delta}.$ Then using Proposition \ref{equalities: adjoint EVF and EVF} and the fact that $\U$ and $\U_r^*$ define both strongly continuous evolution families on $V$ and $H$ we obtain that 
\begin{align*} 
\|\U(t,s)x\|_{V'}&=\sup_{\underset{v\in V}{\|v\|_V=1}}\mid<\U(t,s)x, v>\mid
\\&=\sup_{\underset{v\in V}{\|v\|_V=1}}\mid (\U(t,s)x|v)_H\mid
=\sup_{\underset{v\in V}{\|v\|_V=1}}\mid (x| \U(t,s)^\prime v)_H\mid
\\&=\sup_{\underset{v\in V}{\|v\|_V=1}}\mid (x|\U_r^*(T-s,T-t)v)_H\mid
\\&=\sup_{\underset{v\in V}{\|v\|_V=1}}\mid <x,\U_r^*(T-s,T-t)v>\mid
\\&\leq \|x\|_{V^\prime}\|\U_r^*(T-s,T-t)\|_{\L(V)}
\\&\leq c\|x\|_{V^\prime}
\end{align*}
where $c>0$ is such that $\underset{t,s\in\Delta}{\sup}\|\U_r^*(t,s)\|_{\L(V)}\leq c.$ Thus, the claim follows since $H$ is dense in $V'.$
\end{proof}

 %

Let $\Delta:=\{(t,s)\in\Delta\mid t\ge s\}.$ The following theorem is the main result of this paper

\begin{theorem}\label{main result}
Let $\fra$ be a non-autonomous sesquilinear form satisfying 
(\ref{eq:continuity-nonaut})-(\ref{square property}). Let $\{U(t,s):\ (t,s)\in\Delta\}$ given by (\ref{evolution family}). Then the function $(t,s)\mapsto \U(t,s)$ is norm continuous on $\Delta$ into $\L(X)$ for $X=V, H$ and $V'.$ 
\end{theorem}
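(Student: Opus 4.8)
The plan is to treat the three spaces in the order $V$, $V'$, $H$, obtaining the last (and genuinely new) case for free from the first two by complex interpolation. Throughout, write $0\le s<t\le T$ and let $(t',s')$ range over pairs with $0\le s'<t'\le T$ tending to $(t,s)$.

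First, for $X=V$ I would simply invoke \cite[Theorem 2.6]{LH17}: since $V\underset d\hookrightarrow V_\gamma$ gives $\Vert\cdot\Vert_{V_\gamma}\le c\Vert\cdot\Vert_V$, assumption \eqref{eq 1:Dini-condition} implies the weaker \eqref{eq 1:Dini-conditionWeaker} (after replacing $\omega$ by $c\,\omega$), so $\U$ is norm continuous in $V$. The returned adjoint form $\fra^*_r$ satisfies \eqref{eq:continuity-nonaut}--\eqref{square property} with the same data — indeed $|\fra^*_r(t,u,v)-\fra^*_r(s,u,v)|=|\fra(T-t,v,u)-\fra(T-s,v,u)|\le\omega(|t-s|)\Vert u\Vert_{V_\gamma}\Vert v\Vert_{V_\gamma}$ — so the same theorem applied to $\fra^*_r$ shows that $\U^*_r$ is norm continuous in $V$ as well; this is what makes the $V'$ step run.

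Next, for $X=V'$ I would upgrade the pointwise estimate in the proof of Proposition \ref{Lemma EVF on V'} to operator norms. For $x\in H$ and $v\in V$, using $H\cong H'$ and $\U(t,s)'=\U^*_r(T-s,T-t)$ (Proposition \ref{equalities: adjoint EVF and EVF}),
\[
\langle\U(t,s)x,v\rangle=(\U(t,s)x\mid v)_H=(x\mid\U(t,s)'v)_H=(x\mid\U^*_r(T-s,T-t)v)_H,
\]
and likewise with $(t',s')$ in place of $(t,s)$. Subtracting and using $|(x\mid w)_H|\le\Vert x\Vert_{V'}\Vert w\Vert_V$ for $x\in H$, $w\in V$, then the density of $H$ in $V'$ together with $\U(t,s),\U(t',s')\in\L(V')$ (Proposition \ref{Lemma EVF on V'}), I obtain
\[
\bigl\Vert\U(t,s)-\U(t',s')\bigr\Vert_{\L(V')}\le\bigl\Vert\U^*_r(T-s,T-t)-\U^*_r(T-s',T-t')\bigr\Vert_{\L(V)} .
\]
Since $(t',s')\mapsto(T-s',T-t')$ continuously preserves the region $s'<t'$, the right-hand side tends to $0$ by the first step applied to $\fra^*_r$, so $\U$ is norm continuous in $V'$.

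Finally, for $X=H$ I would interpolate. In the Gelfand triple $V\underset d\hookrightarrow H\cong H'\underset d\hookrightarrow V'$ one has $[V',V]_{1/2}=H$ with equivalent norms: if $B$ is the positive self-adjoint operator in $H$ associated with $(\cdot\mid\cdot)_V$, then $B\ge c^{-2}I$ by \eqref{eq:continuity-nonaut}, $V=D(B^{1/2})$ and $V'=D(B^{-1/2})$, so $[D(B^{-1/2}),D(B^{1/2})]_{1/2}=D(B^0)=H$. The operator $S:=\U(t,s)-\U(t',s')$ acts consistently on $V'$, $H$ and $V$, so the interpolation property of operators yields a constant $C$ (independent of $S$) with $\Vert S\Vert_{\L(H)}\le C\Vert S\Vert_{\L(V')}^{1/2}\Vert S\Vert_{\L(V)}^{1/2}$, and both factors on the right go to $0$ by the first two steps. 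The main obstacle is exactly this last case: the compactness of $V\hookrightarrow H$ used in \cite[Theorem 3.4]{LH17} is unavailable, so the $H$-estimate must be manufactured from the $V$- and $V'$-estimates; and the $V'$-estimate is itself delicate, resting entirely on the returned-adjoint duality \eqref{Key equalities: adjoint EVF and EVF} and on transferring $V$-norm continuity from $\fra$ to $\fra^*_r$. Checking that $[V',V]_{1/2}=H$ genuinely applies to our triple and that the interpolation constant is immaterial is the one technical point requiring care.
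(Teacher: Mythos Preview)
Your proof is correct and follows essentially the same route as the paper: first $V$ via \cite{LH17}, then $V'$ by duality through the returned-adjoint identity \eqref{Key equalities: adjoint EVF and EVF} applied to the difference $\U(t,s)-\U(t',s')$, and finally $H$ by interpolation between $V$ and $V'$. You supply more detail than the paper does---in particular the verification that $\fra^*_r$ inherits \eqref{eq 1:Dini-condition} and the identification $[V',V]_{1/2}=H$---but the architecture is identical.
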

\begin{proof}
	The norm continuity for $\U$ in the case where $X=V$
	follows from \cite[Theorem 2.7]{LH17}. On the other hand, applying \cite[Theorem 2.7]{LH17} 
	to $\fra_r^*$ we obtain that $\U_r^*$ is also norm continuous on $\Delta$ with values in $\L(V).$ 
	Using Proposition  \ref{equalities: adjoint EVF and EVF}, we obtain by similar arguments as in the proof 
	of Lemma \ref{Lemma EVF on V'} 
	\begin{equation}
	 \|\U(t,s)-\U(t',s')\|_{\L(V')}\leq \|\U_r^*(T-s,T-t)x-\U_r^*(T-s',T-t')x\|_{\L(V)}
	\end{equation}
for all $x\in V'$ and $(t,s), (t',s')\in \Delta.$ This implies that $\U$ is norm continuous on $\Delta$ with values in $\L(V').$
Finally, the norm continuity in $H$ follows then by interpolation.
\end{proof}

\section{Examples}\label{S application}
\noindent This section is devoted to some relevant examples illustrating the theory developed 
in the previous sections. We refer to \cite{Ar-Mo15} and \cite{Ou15} and the references therein for further examples.

\medskip

$(i)$ \textbf{Laplacian with time dependent Robin boundary conditions on exterior domain} Let $\Omega$ be a bounded domain of $\R^d$ with Lipschitz boundary $\Gamma.$ Denote by $\sigma$ the $(d-1)$-dimensional Hausdorff measure on $\Gamma.$ Let $\Omega_{ext}$ denote the exterior domain of $\Omega,$ i.e., $\Omega_{ext}:=\R^d\setminus\overline{\Omega}.$ Let $T>0$ and $\alpha>1/4.$ Let $\beta:[0,T]\times \Gamma \lra \R$
be a bounded measurable function such that 
\[|\beta(t,\xi)-\beta(t,\xi)|\leq c|t-s|^\alpha\]
for some constant $c>0$ and every $t,s\in [0,T], \xi\in \Gamma.$ We consider the from $\fra:[0,T]\times H^1(\Omega_{ext})\times H^1(\Omega_{ext})\lra \C$ defined by 
\[\fra(t;u,v):=\int_{\Omega_{ext}}\nabla u\cdot\nabla v {\rm d}\xi+\int_{\Omega_{ext}}\beta(t,\cdot)\restr{u}{\Gamma} \restr{\bar v}{\Gamma} {\rm d}\sigma \]
where $u\to \restr{u}{\Gamma}: H^1(\Omega_{ext}) \lra L^2(\Gamma,\sigma)$ is the trace operator which is bounded \cite[Theorem 5.36]{AdFou}. The operator $A(t)$ associated with $\fra(t;\cdot,\cdot)$ on $H:=L^2(\Omega_{ext})$ is minus the Laplacian with time dependent Robin boundary conditions 
\[\partial_\nu u(t)+\beta(t,\cdot)u=0\ \text{ on } \Gamma. \]
Here $\partial_\nu$ is the weak normal derivative. Thus the domain of $A(t)$ is the set 
\[D(A(t))=\Big\{ u\in H^1(\Omega_{ext}) \mid \Laplace u\in L^2(\Omega_{ext}), \partial_\nu u(t)+\beta(t,\cdot)\restr{u}{\Gamma}=0 \Big\} \]
and for $u\in D(A(t)), A(t)u:=-\Laplace u.$ 
Thus similarly as in \cite[Section 5]{Ar-Mo15} one obtains that $\fra$ satisfies  (\ref{eq:continuity-nonaut})-(\ref{square property}) with $\gamma:=r_0+1/2$ and $\omega(t)=t^\alpha$ where $r_0\in(0,1/2)$ such that $r_0+1/2<2\alpha.$ We note that \cite[Section 5]{Ar-Mo15} the authors considered the Robin Laplacian on the bounded Lipschitz domain $\Omega.$ The main ingredient used there is  that the trace operators  are bounded from $H^{s}(\Omega)$ with value in $H^{s-1/2}(\Gamma,\sigma)$ for all $1/2<s<3/4.$ This boundary trace embedding theorem holds also for unbounded Lipschitz domain, and thus for $\Omega_{ext}$,  see \cite[Theorem 3.38]{Mclean} or \cite[Lemma 3.6]{Cos}.

Thus applying \cite[Theorem 4.1]{Ar-Mo15} and Theorem \ref{main result} we obtain that the non-autonomous Cauchy problem 

\begin{equation}\label{RobinLpalacian}
 \left\{
 \begin{aligned}
 \dot {u}(t) - \Laplace u(t)& = 0, \ u(0)=x\in H^1(\Omega_{ext })
 \\  \partial_\nu u(t)+\beta(t,\cdot){u}&=0 \ \text{ on } \Gamma
 \end{aligned} \right.
 \end{equation}
 has $L^2$-maximal regularity in $L^2(\Omega_{ext})$ and its solution is governed by an  evolution family $\U(\cdot,\cdot)$ that is norm continuous on each space $V, L^2(\Omega_{ext})$ and $V'.$  
 \subsection{Non-autonomous Schr\"odinger operators} 
 Let $m_0, m_1\in L_{Loc}^1(\R^d)$ and $m:[0,T]\times\R^d\lra \R$ be a measurable function such that there exist positive constants $\alpha_1,\alpha_2$ and $\kappa$ such that 
 \[\alpha_1 m_0(\xi)\leq m(t,\xi)\leq \alpha_2 m_0(\xi),
 \quad \text{ }\
 \quad \mid m(t,\xi)-m(s,\xi)\mid \leq \kappa|t-s|m_1(\xi)\]
for almost every $\xi\in \R^d$ and every $t,s\in [0,T].$ Assume moreover that there exist a constant $c>0$ and $s\in[0,1]$ such that  for $u\in C_c^\infty(\R^d)$
\begin{equation}\label{additional AS Svhrödinger example}\int_{\R^d}m_1(\xi)|u(\xi)|^2 \d\xi\leq c\|u\|_{H^s(\R^d)}. 
\end{equation}Consider the non-autonomous Cauchy problem 
\begin{equation}\label{Schroedinger operator}
\left\{
\begin{aligned}
\dot {u}(t) - &\Laplace u(t)+m(t,\cdot)u(t) = 0,
\\  u(0)&=x\in V. 
\end{aligned} \right.
\end{equation}
Here $A(t)=-\Laplace+m(t,\cdot)$ is associated with the non-autonomous form $\fra:[0,T]\times V\times V\lra \C$ given by
\[V:=\left\{u\in H^1(\R^d):  \int_{\R^d}m_0(\xi)|u(\xi)|^2  \d\xi <\infty \right\}\]and
\[\fra(t;u,v)=\int_{\R^d}\nabla u\cdot\nabla v  {\rm d} \xi+\int_{\R^d}m(t,\xi)|u(\xi)|^2 \d\xi. \] The form $\fra$ satisfies also (\ref{eq:continuity-nonaut})-(\ref{square property}) with $\gamma:=s$ and $\omega(t)=t^\alpha$ for $\alpha>\frac{s}{2}$ and $s\in[0,1].$ 

 This example is taken from \cite[Example 3.1]{Ou15}. 
Using our Theorem \ref{main result} we have that the solution of Cauchy problem (\ref{Schroedinger operator}) is governed by a norm continuous evolution family on $L^2(\R^d), V$ and $V'.$  
 

\end{document}